\setlist[itemize]{itemsep=.2cm, leftmargin=.75cm}
\setlist[description]{itemsep=.5cm, leftmargin=.5cm, font=\normalfont\underline}
\newtheorem{theorem}{Theorem}[subsection]
\newtheorem{lemma}[theorem]{Lemma}
\newtheorem{proposition}[theorem]{Proposition}
\newtheorem{corollary}[theorem]{Corollary}
\theoremstyle{definition}
\newtheorem{definition}[theorem]{Definition}
\newtheorem{remark}[theorem]{Remark}
\newtheorem{example}[theorem]{Example}
\newcommand\eg{{\em e.g.}}
\newcommand\ie{{\em i.e.}}
\newcommand\andquad{\qquad\mathrm{and}\qquad}
\newcommand\com{,\ }
\newcommand\N{\mathbb{N}}
\newcommand\Ninfty{\mathbb{N}\cup\{\infty\}}
\newcommand\K{\mathbb{K}}
\newcommand\R{\mathbb{R}}
\newcommand\X{x_1,{\cdots},x_n}
\newcommand\KX{\K[\X]}
\newcommand\KXX{\K[[\X]]}
\newcommand\KXnc{\K\coeff{\X}}
\newcommand\KXXnc{\K\coeff{\coeff{\X}}}
\newcommand\Kxy{\K\coeff{\coeff{x,y}}}
\newcommand\topFPS{\tau_\delta}
\newcommand\olI{\overline{I}}
\newcommand\coeff[1]{\langle#1\rangle}
\newcommand\integers[2]{\{#1,{\cdots},#2\}}
\DeclareMathOperator\supp{supp}
\DeclareMathOperator\val{val}
\newcommand\reflRew{\overset{=}{\to}}
\newcommand\transRew{\overset{\star}{\to}}
\newcommand\topRew{\begin{tikzcd}[ampersand replacement=\&,cramped,sep=small]\arrow[r, circle]\&{}\end{tikzcd}}
\newcommand\opord{<_\mathrm{op}}
\newcommand\dTop[1]{\tau^{#1}_\text{dis}}
\DeclareMathOperator\lc{lc}
\DeclareMathOperator\lm{lm}
\DeclareMathOperator\rem{r}
\begin{document}

\title{Topological closure of formal powers series ideals\\
and application to topological rewriting theory}
\author{Cyrille Chenavier}
\author{Thomas Cluzeau}
\author{Adya Musson-Leymarie \thanks{Corresponding author: \texttt{adya.musson-leymarie@unilim.fr}}}
\affil{Univ. Limoges, CNRS, XLIM, UMR 7252, F-87000 Limoges}
\date{}

\maketitle
      
\begin{abstract}
  We investigate formal power series ideals and their relationship to
  topological rewriting theory. Since commutative formal power series algebras 
  are Zariski rings, their ideals are closed for the adic topology defined by
  the maximal ideal generated by the indeterminates. We provide a constructive
  proof of this result which, given a formal power series in the topological
  closure of an ideal, consists in computing a cofactor representation of the
  series with respect to a standard basis of the ideal. We apply this result in
  the context of topological rewriting theory, where two natural notions of 
  confluence arise: topological confluence and infinitary confluence. We give
  explicit examples illustrating that in general, infinitary confluence is a
  strictly stronger notion than topological confluence. Using topological
  closure of ideals, we finally show that in the context of rewriting theory on
  commutative formal power series, infinitary and topological confluences are
  equivalent when the monomial order considered is compatible with the degree. 
\end{abstract}
\noindent
\begin{small}\textbf{Keywords:} Complete rings, formal power series, topological
  rewriting.\\[0.2cm]
  \textbf{M.S.C 2020 - Primary:} 13F25, 13J10. \textbf{Secondary:} 68Q42. 
\end{small}

\tableofcontents

\newpage

\begin{center}
  \bf\large 1. INTRODUCTION
  \addcontentsline{toc}{section}{1. Introduction}
\end{center}
\bigskip

Algebraic rewriting systems are computational models used to prove algebraic
properties through rewriting reasoning. For that, one considers a presentation
by generators and relations of an algebraic system, \eg, a monoid or an algebra,
and associates to each relation a rewriting rule, consisting in simplifying one
term of the relation into the other terms of the relation. When the underlying
algebraic system is equipped with the discrete topology, two fundamental
rewriting properties are termination, \ie, there is no infinite rewriting
sequence, and confluence, \ie, whenever two finite rewriting sequences diverge
from a common term, then these sequences can be extended by finitely many
rewriting steps to reach a common term. When these two properties hold together,
rewriting theory provides effective methods for computing linear bases, Hilbert
series, homotopy bases or free resolutions~\cite{Anick86, Guiraud19, Guiraud12,
  Kobayashi90}, and for obtaining constructive proofs of coherence theorems,
from which we deduce an explicit description of the action of a monoid on a 
category~\cite{Gaussent15}, or of homological properties such as finite
derivation type, finite homological type~\cite{Guiraud18, Squier87}, or
Koszulness~\cite{Priddy70}. 
\smallskip

Topological rewriting theory is an extension of discrete rewriting, where the
underlying set of terms admits a non-discrete topology. Such topological
rewriting systems appear in computer science, in the context of rewriting over
infinitary $\Sigma/\lambda$-terms~\cite{Dershowitz91, Kennaway97}, and in
abstract algebra, in the context of rewriting over commutative formal power 
series~\cite{Chenavier20}. With this topological framework, it is natural to
consider not only finite rewriting sequences, but also rewriting sequences that
converge for the underlying topology, which brings us to two different notions
of confluence: topological and infinitary confluences. Each of these notions
allows us to extend diverging rewriting sequences by infinite rewriting
sequences having a common term as a limit. However, the rewriting sequences
diverging from the same term have different natures: they are assumed to be
finite for topological confluence, and they converge to limits when one
deals with infinitary confluence. Infinitary confluence is a strictly stronger 
property in general, explicit examples are given in Subsection 4.1 of the
present paper.
\smallskip

Our objective is to prove that in the context of rewriting over commutative 
formal power series, topological and infinitary confluence are actually
equivalent properties. 

\paragraph{Topological closure of commutative formal power series ideals.}

Proving that infinitary confluence and topological confluence are equivalent in
the context of commutative formal power series requires to establish that an
ideal of a formal power series algebra $\KXX$, where $\K$ is a field of
arbitrary characteristic, is closed for the~$(\X)$-adic
topology. The latter, denoted by $\topFPS$, is induced by the metric $\delta$
which is defined by
\begin{equation}\label{equ:delta}
    \delta(f,g)=\frac{1}{2^{\val(f-g)}},
\end{equation}
where the valuation $\val(h)$
of $h$ is the smallest degree of a monomial appearing with a non-zero
coefficient in $h$. By a general theorem on Zariski rings~\cite{Zariski75}, all
the ideals of $\KXX$ are indeed closed for the topology $\topFPS$. In this
paper, we propose a constructive proof of this result, which is based on
rewriting theory. Indeed, in order to show that any ideal $I\subseteq\KXX$ is
equal to its topological closure $\olI$, we fix a finite standard basis $G$ of
$I$, \ie, a generating set of $I$ that induces a topologically confluent
rewriting system over $\KXX$. Such standard bases have the property that their
leading monomials, for a given order on monomials, divide leading monomials of
all series in $I$. In Proposition~\ref{prop:LM-I-equals-LM-I-bar}, we show how
the definition of the metric $\delta$ implies that the set of leading monomials
of $I$ and $\olI$ are equal when the order on monomials is compatible with the 
degree, in the sense that it is increasing with respect to the degree. From
this, given a commutative formal power series $f$ in $\olI$, we get a procedure
for eliminating monomials in $f$ using $G$. This procedure constructs at each
step a cofactor representation with respect to $G$ that converges to $f$. More
explicitly, for each $s_i\in G$, this yields a coefficient
$f_i^{(\infty)}\in\KXX$ proving that $f$ is indeed in $I$, as stated in our
first contribution:
\newpage

\begin{quote}
  \textbf{Theorem~\ref{thm:any-ideal-is-closed}.} \emph{Let $I\subseteq\KXX$ be
  a formal power series ideal, $G=\{s_1,\cdots,s_\ell\}$ be a finite standard
  basis of $I$ with respect to a monomial order which is compatible with the
  degree, and $f$ be an element in the topological closure of $I$ for the
  $(\X)$-adic topology. Then, the limit
  coefficients~$\left(f_1^{(\infty)},\cdots,f_\ell^{(\infty)}\right)$ of $f$ 
  relative to $G$ verify: 
  \[f=f_1^ {(\infty)}s_1+\cdots+f_\ell^{(\infty)}s_\ell.\]
  In particular, $
  I$ is closed for the $(\X)$-adic topology.
}
\end{quote}

\paragraph{Infinitary and topological confluence for commutative formal power
  series.} 

As stated above, infinitary confluence implies topological confluence. In order 
to show the equivalence of the two notions for rewriting systems over
commutative formal power series, we thus have to show the converse. For that, we  
consider a topological rewriting system on $\KXX$, where the rewriting relation
is induced by a subset $G\subseteq\KXX$ and an order on monomials that is 
compatible with the degree. In other words, we have a rewriting step $f\to h$,
if we can substitute in $f$ a leading monomial of an element of~$G$ and replace
it with the corresponding remainder to get $h$. We assume that this rewriting 
relation is topologically confluent, meaning that whenever a commutative formal
power series $f$ rewrites after finitely many rewriting steps into two
commutative formal power series $g$ and $h$, then the latter rewrite after
possibly infinite rewriting steps into a common limit $\ell$. Denoting by
$\transRew$ finite rewriting sequences and by $\topRew$ rewriting sequences that
have well-defined limits for the $(\X)$-adic topology $\topFPS$, pictorially, we
have:
\[\begin{tikzcd}
& f\ar[dl, "\star"']\ar[dr, "\star"] &\\
g\ar[dr, dashed, circle] & & h\ar[dl, dashed, circle]\\
& \ell &
\end{tikzcd}\]
Infinitary confluence is thus represented by:
\[\begin{tikzcd}
& f\ar[dl, circle]\ar[dr, circle] &\\
g\ar[dr, dashed, circle] & & h\ar[dl, dashed, circle]\\
& \ell &
\end{tikzcd}\]
Hence, we have to prove that if we assume topological confluence only, the
dashed arrows always exist in the last diagram. The crucial observation is that
since commutative formal power series ideals are closed for $\topFPS$, the
elements $f-g$, $f-h$, and thus also $g-h$, belong to the ideal $I$ generated
by $G$. Since the assumption of topological confluence is equivalent to the fact 
that $G$ is a standard basis of $I$, leading monomials of elements of $I$ are
always divisible by leading monomials of elements of $G$, so that we can rewrite
simultaneously $g$ and $h$ as long as we obtain different results:
\[\begin{tikzcd}
& f\ar[dl, circle]\ar[dr, circle] &\\
g\ar[d] & & h\ar[d]\\
g_1\ar[d] &  & h_1\ar[d]\\
\vdots\ar[d] & & \vdots\ar[d]\\
g_k & & h_k
\end{tikzcd}\]
The rewriting process stops if $g_k=h_k$ at some step $k$, in such case the
dashed arrows are constructed. If not, we show that the sequences
$\left(g_k\right)_{k\in\N}$ and $\left(h_k\right)_{k\in\N}$ are in fact Cauchy
sequences for the metric $\delta$ and thus, have limits. The last part of the
proof is that these two limits are equal and our main result is stated as
follows. 
\medskip

\begin{quote}
  \textbf{Theorem~\ref{thm:top_confluence_implies_inf_confluence}.} \emph{Let
  $G\subseteq\KXX$ be a set of formal power series, $<$ be a monomial order that
  is compatible with the degree, and $\left(\KXX,\topFPS,\to\right)$ be the
  induced topological rewriting system. Then, $\to$ is $\topFPS$-confluent if 
  and only if it is infinitary confluent.
}
\end{quote}
\bigskip

\begin{center}
  \bf\large 2. CONVENTIONS AND NOTATIONS
  \addcontentsline{toc}{section}{2. Conventions and notations}
\end{center}
\setcounter{section}{2}
\bigskip

In Section 2, we recall the construction of algebras of commutative formal power
series and the notion of standard basis for an ideal of commutative formal power
series.
\smallskip

In Subsection 3.2, we will consider non-commutative formal power series.  From
now on, we will drop the adjective ``commutative'' when we consider commutative
formal power series, and simply specify the adjective ``non-commutative'' when
we consider non-commutative formal power series.  

\bigskip

\noindent
{\large\textbf{2.1. Formal power series algebras}}
\addcontentsline{toc}{subsection}{2.1. Formal power series algebras} 
\setcounter{subsection}{1}
\setcounter{theorem}{0}
\medskip

Throughout this paper, we fix a positive integer $n$, a set of indeterminates
$\{\X\}$, and a field~$\K$ of arbitrary characteristic. We denote by $[\X]$ the
free commutative monoid generated by $\{\X\}$ and by $\KX$ the algebra of
polynomials with indeterminates $\{\X\}$ and coefficients in $\K$. For any
monomial $m=x_1^{\mu_1}\cdots x_n^{\mu_n}\in[\X]$, where
${\mu_1},\cdots,{\mu_n}$ are non-negative integers, let
$\deg(m):=\mu_1+\cdots+\mu_n$ be the degree of $m$. We denote by $\coeff{f,m}$
the coefficient of the monomial $m$ in $f$ for any $f \in \KX$. Let $\supp(f)$
be the {\em support} of $f$, defined as the set of monomials that occur in $f$:  
\[\supp(f):=\left\{m\in[\X]\com\coeff{f,m}\neq 0\right\}.
\smallskip\]

We denote by $\KXX$ the algebra of formal powers series with indeterminates
$\{\X\}$ and coefficients in $\K$, defined as the Cauchy completion of $\KX$ for
the metric $\delta$ defined by  
  \begin{equation*} 
\forall f,\,g \in \KX, \quad \delta(f,g):=\frac{1}{2^{\val(f-g)}},
\end{equation*}
where, for $h \in \KX$, $\val(h)$ denotes the lowest degree of the monomials
that are in $\supp(h)$.  The coefficient of $m\in[\X]$ in a formal power 
series $f\in\KXX$ is still written $\coeff{f,m}$.
\bigskip

\noindent
{\large\textbf{2.2. Standard bases of formal power series ideals}}
\addcontentsline{toc}{subsection}{2.2. Standard bases of formal power series
  ideals}  
\setcounter{subsection}{2}
\setcounter{theorem}{0}
\medskip

Throughout this subsection, we fix a {\em monomial order} $<$ on $[\X]$, that is
a well-order such that, for all monomials $m_1,m_2,m_3 \in [\X]$, we have the
implication
\[m_1<m_2\Rightarrow m_1m_3<m_2m_3.\]
We say that $<$ is {\em compatible with the degree} if, for all monomials
$m_1,m_2$ such that $\deg(m_1)$ is strictly smaller than $\deg(m_2)$, it follows
that $m_1<m_2$. We denote by $\opord$ the opposite order of $<$. For a non-zero
$f\in\KXX$, we define the {\em leading monomial} and the {\em leading
coefficient}, that we will denote $\lm(f)$ and $\lc(f)$, as the greatest monomial for
$\opord$ that occurs in $f$ and the coefficient of $\lm(f)$ in $f$, respectively,
\ie, we have:  \[\lm(f):=\max_{\opord}\supp(f), \qquad \lc(f):=\coeff{f,\lm(f)}.
\smallskip\]
We define the {\em remainder} of $f$ as:
\[\rem(f):=\lc(f)\lm(f)-f.
\smallskip\]
Notice that either $\rem(f)=0$ or $\lm(\rem(f))\opord\lm(f)$. Moreover, we
verify the following properties: 
\begin{itemize}
\item $\forall f\in\KXX \setminus\{0\}\com \forall m\in[\X]\com\lm(m
  f)=m\lm(f)$,  
\item $\forall f, g\in\KXX\setminus\{0\}\com\lm(f+g)\opord
  \max_{\opord}\{\lm(f),\lm(g)\}$,
\item $\forall f\in\KXX\setminus\{0\}\com\forall
  \lambda\in\K\setminus\{0\}\com\lm(\lambda f)=\lm(f)$. 
\end{itemize}
\medskip

\begin{definition}
  Let $I\subseteq\KXX$ be a formal power series ideal and let $<$ be a monomial
  order on $[\X]$. A {\em standard basis} of $I$ with respect to $<$ is a subset
  $G\subseteq I$ such that for every non-zero formal power series $f\in I$,
  there exists $g\in G$ such that $\lm(g)$ divides  $\lm(f)$.  
\end{definition}

Recall from~\cite[Section 6.4]{Greuel02} that, for every ideal
$I\subseteq\KXX$, there always exists a finite standard basis of $I$.
\bigskip

\begin{center}
  \bf\large 3. TOPOLOGICAL PROPERTIES OF FORMAL POWER SERIES IDEALS
  \addcontentsline{toc}{section}{3. Topological properties of formal power
    series ideals} 
\end{center}
\setcounter{section}{3}
\bigskip

In Subsection 3.1, we provide a constructive proof of the fact that formal power
series ideals are closed for the $(\X)$-adic topology, induced by the metric
$\delta$ defined in~(\ref{equ:delta}). This is a particular case of a more
general result about Zariski rings~\cite{Zariski75}. In Subsection 3.2, we
recall from~\cite{Gerritzen98} that, for non-commutative formal power series,
there exist ideals that are not closed.  
\bigskip

\noindent
{\large\textbf{3.1. Closure in the commutative case}}
\addcontentsline{toc}{subsection}{3.1. Closure in the commutative case} 
\setcounter{subsection}{1}
\setcounter{theorem}{0}
\medskip

Let us consider an ideal $I\subseteq\KXX$ and the $(\X)$-adic topology
$\topFPS$, induced by the metric $\delta$. We shall prove that $I$ is closed for
$\topFPS$. To do that, we will work with an arbitrary standard basis of $I$, so
that we fix a monomial order $<$ on $[\X]$ and a finite standard basis
$G=\{s_1,\cdots,s_\ell\}$ of $I$ with respect to $<$.  
\smallskip

Given a subset $S\subseteq\KXX$, we denote by $\lm(S)$ the set of leading
monomials of non-zero elements of $S$:
\[\lm(S):=\{\lm(f)\com f\in S\setminus\{0\}\}.
\smallskip\]
We start by showing the following result, in which $\olI$ denotes the
topological closure of $I$ for $\topFPS$. 
\medskip

\begin{proposition}\label{prop:LM-I-equals-LM-I-bar}
  If the monomial order $<$ is compatible with the degree, then
  $\lm(I)=\lm(\olI)$.  
\end{proposition}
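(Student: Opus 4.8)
The inclusion $\lm(I)\subseteq\lm(\olI)$ is immediate since $I\subseteq\olI$, so the content of the statement lies entirely in the reverse inclusion. The plan is to start from a non-zero $f\in\olI$ and produce an element $g\in I$ with $\lm(g)=\lm(f)$, which then gives $\lm(f)\in\lm(I)$.

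The first step is to record what degree-compatibility buys us. For any non-zero $h\in\KXX$, the monomial $\lm(h)=\max_{\opord}\supp(h)$ is, by definition of $\opord$, the $<$-minimum of $\supp(h)$; since $<$ is increasing with the degree, every monomial of lower degree is $<$-smaller, so $\lm(h)$ has the smallest degree occurring in $h$. In particular, writing $d:=\val(f)$, we have $\deg(\lm(f))=d$ and $\lm(f)$ is precisely the $<$-smallest monomial of degree $d$ in $\supp(f)$. I would isolate this as a short preliminary observation and reuse it twice.

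The second step unwinds the definition of the closure: there exists $g\in I$ with $\delta(f,g)<2^{-d}$, that is $\val(f-g)>d$. Hence $f$ and $g$ have the same coefficient on every monomial of degree at most $d$; in particular $\coeff{g,\lm(f)}=\coeff{f,\lm(f)}=\lc(f)\neq 0$, so $\val(g)=d$ and the degree-$d$ homogeneous components of $f$ and $g$ coincide. The third step concludes by applying the preliminary observation to $g$: $\lm(g)$ is the $<$-smallest monomial of degree $d$ in $\supp(g)$, which by the previous step equals the $<$-smallest monomial of degree $d$ in $\supp(f)$, namely $\lm(f)$. Therefore $\lm(f)=\lm(g)\in\lm(I)$.

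I do not anticipate a genuine obstacle; the only points requiring care are the translation between $<$ and its opposite $\opord$, and the equivalence ``$\delta(f,g)<2^{-d}$'' $\Longleftrightarrow$ ``$f$ and $g$ agree on all monomials of degree $\le d$''. It is worth emphasising that degree-compatibility is essential here: without it, an element of $I$ arbitrarily close to $f$ for $\delta$ could still carry a leading monomial of large degree that is nevertheless $<$-small, hence different from $\lm(f)$, and the whole argument would collapse.
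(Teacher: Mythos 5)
Your proof is correct and follows essentially the same route as the paper's: pick $g\in I$ with $\delta(f,g)<2^{-\deg(\lm(f))}$, observe that $f$ and $g$ then agree on all sufficiently $<$-small (equivalently, low-degree) monomials, and conclude $\lm(g)=\lm(f)$. The only cosmetic difference is that you phrase the agreement in terms of degree-$d$ homogeneous components, while the paper phrases it directly via ``for any monomial $m\leq\lm(f)$''; the underlying argument, including the crucial use of degree-compatibility, is the same.
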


\begin{proof}
  Since $I \subseteq \olI$, we clearly have $\lm(I)\subseteq\lm(\olI)$. Let
  us show the converse inclusion. Let $f\in\olI$ be a non-zero formal power
  series. Then there exist formal power series in $I$ arbitrarily close to $f$
  for the metric $\delta$, in particular, there exists $g \in I$ such that
  \[\delta(g, f)<\frac{1}{2^{\deg(\lm(f))}},
  \smallskip\]
  meaning that $\deg(\lm(g-f))>\deg(\lm(f))$. Since the order $<$ is assumed to
  be compatible with the degree, we get $\lm(g-f)>\lm(f)$. Thus, for any
  monomial $m\leq\lm(f)$, we have $\coeff{g-f,m}=0$, hence, we have
  $\coeff{g,m}=\coeff{f,m}$. By definition of $\lm(f)$, it follows that, on one
  hand, $g$ is non-zero and, on the other hand, $\lm(g)=\lm(f)$. Hence,
  $\lm(f)\in\lm(I)$, which ends the proof.
\end{proof}
\medskip

\begin{remark}
  In Theorem~\ref{thm:any-ideal-is-closed}, we will show that $I$ is closed,
  from which we get that $\lm(I)=\lm(\olI)$ is true for any monomial
  order. However, we were not able to provide a constructive proof of this fact
  that works for monomial orders that are not assumed to be compatible with the
  degree. 
\end{remark}

In order to show that $I$ is closed in $\KXX$, we have to show that $\olI$ is
included in $I$, \ie, every $f\in\olI$ belongs to $I$. For that, we are going to
use Proposition~\ref{prop:LM-I-equals-LM-I-bar} to construct, for any formal
power series $f$ in $\olI$, a tuple $\left(f_i\right)_{1\leq i\leq\ell}$ of
formal power series such that $f=f_1s_1+\cdots+f_\ell s_\ell$. This will suffice
to prove that $f$ belongs to $I$ since the $s_i$ are elements of $I$. 
\smallskip

Fix $f\in \olI$ and assume from now on that the monomial order is compatible
with the degree. Note that the latter assumption is not restrictive since we can 
choose the monomial order we want and work with it to achieve our goal. 
Let us construct inductively: 
\begin{itemize}
\item a sequence $\N\ni k\mapsto\left(f_i^{(k)}\right)_{1\leq i\leq\ell}$ of
  tuples of formal power series,  
\item a sequence $\left(F_k\right)_{k\in\N}$ of formal power series in $\olI$
  and $\left(m_k\right)_{k\in\N}$ the corresponding sequence of leading
  monomials, \ie, for all $k\in\N$, $m_k:=\lm(F_k)$, 
\item a sequence $\left(i_k\right)_{k\in\N}$ of indices in $\integers{1}{\ell}$
  and a sequence $\left(q_k\right)_{k\in\N}$ of monomials. 
\end{itemize}

\begin{description}
\item[Base case:] we let, for any $i\in\integers{1}{\ell}$,
  $f_i^{(0)}:=0$. Notice how we obtain $f-\sum_{i = 1}^{\ell}  f_i^{(0)} s_i\in 
  \olI$.
\item[Induction step:] assume that, for each $i\in\integers{1}{\ell}$, the
    sequence $\left(f_i^{(k)}\right)_{k\in\N}$ is defined up to and including
    $k\in\N$ in such a way that:
  \[F_k:=f-\sum_{i = 1}^{\ell}f_i^{(k)}s_i \in \olI.
  \]
  \begin{itemize}
  \item If $F_k = 0$, we then have $f=f_1s_1+\cdots+f_\ell s_\ell$, where
    $s_i\in I$ and $f_i=f_i^{(k)}\in\KXX$. Hence, $f\in I$ and it is over.   
  \item Otherwise, the leading monomial $m_k:=\lm(F_k)$ is well-defined. Since 
    $F_k\in\olI$ and $<$ is compatible with the degree, we have $m_k\in\lm(I)$
    by Proposition~\ref{prop:LM-I-equals-LM-I-bar}. Then, as $G$ is a standard
    basis of $I$ with respect to $<$, there exist $i_k\in\integers{1}{\ell}$
    and $q_k \in[\X]$ such that:  
    \begin{equation} \label{equ:q_k}
      m_k=\lm(s_{i_k}) q_k.
    \end{equation}
  \end{itemize}
  We then define the $(k+1)$'th tuple in the sequence as:
  \[\forall i\in\integers{1}{\ell}\setminus\{i_k\}\com f_i^{(k+1)}:=f_i^{(k)},
  \smallskip\]
  and:
  \[
  f_{i_k}^{(k + 1)}:=f_{i_k}^{(k)}+\frac{\lc(F_k)}{\lc(s_{i_k})} q_k.
  \]
  Notice that:
  \[\begin{split}
  F_{k+1}
  &:=f-\sum_{i = 1}^{\ell}f_i^{(k+1)}s_i \\
  &=f-f_{i_k}^{(k)}s_{i_k}-\frac{\lc(F_k)}{\lc(s_{i_k})} q_k
  s_{i_k}-\sum_{\substack{i=1 \\ i\neq i_k}}^{\ell}f_i^{(k)}s_i\\ 
  &= \left(f-\sum_{i=1}^{\ell}f_i^{(k)}s_i\right)-\frac{\lc(F_k)}{
    \lc(s_{i_k})}q_k s_{i_k}\\ 
  &= F_k-\frac{\lc(F_k)}{\lc(s_{i_k})}q_k s_{i_k}.
  \end{split}
  \smallskip\]
  By induction hypothesis, we have $F_k\in\olI$ and since $s_{i_k}\in
  I\subseteq\olI$, we get $F_{k+1} \in \olI$. 
\end{description}

If at any step $k$, we get $F_k=0$, we obtain $f\in I$ as explained above. Thus,
assume from now on that, for all $k\in\N$, we have $F_k\neq 0$.  
\medskip

\begin{lemma}\label{lem:m_k-decreasing}
  The sequence of monomials $\left(m_k\right)_{k\in\N}$ is strictly decreasing
  for the opposite order $\opord$. 
\end{lemma}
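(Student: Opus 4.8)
The plan is to show that $F_{k+1}$ is obtained from $F_k$ by cancelling its leading monomial $m_k$, so that $m_{k+1} = \lm(F_{k+1})$ is strictly smaller than $m_k$ for $\opord$. Concretely, from the telescoping computation already performed in the construction, we have
\[F_{k+1} = F_k - \frac{\lc(F_k)}{\lc(s_{i_k})}\, q_k\, s_{i_k}.\]
First I would analyse the term being subtracted. Using the properties of leading monomials recalled above — namely $\lm(q_k s_{i_k}) = q_k \lm(s_{i_k})$ and invariance under nonzero scalar multiplication — the leading monomial of $\frac{\lc(F_k)}{\lc(s_{i_k})} q_k s_{i_k}$ is $q_k \lm(s_{i_k})$, which equals $m_k$ by~(\ref{equ:q_k}), and its leading coefficient is $\frac{\lc(F_k)}{\lc(s_{i_k})}\lc(s_{i_k}) = \lc(F_k)$. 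Hence the coefficient of $m_k$ in $\frac{\lc(F_k)}{\lc(s_{i_k})} q_k s_{i_k}$ is exactly $\lc(F_k)$, the same as the coefficient of $m_k$ in $F_k$.

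Next I would conclude that the monomial $m_k$ does not appear in $F_{k+1}$: the coefficient of $m_k$ in $F_{k+1}$ is $\lc(F_k) - \lc(F_k) = 0$. Moreover every monomial occurring in $F_{k+1}$ occurs either in $F_k$ or in $\frac{\lc(F_k)}{\lc(s_{i_k})} q_k s_{i_k}$; in the first case it is $\opord$-smaller than or equal to $m_k = \lm(F_k)$, and in the second case it is $\opord$-smaller than or equal to $m_k = \lm\!\bigl(\frac{\lc(F_k)}{\lc(s_{i_k})} q_k s_{i_k}\bigr)$ as well — using $\lm(\rem(\cdot))\opord\lm(\cdot)$ together with the additivity bound $\lm(f+g)\opord\max_{\opord}\{\lm(f),\lm(g)\}$. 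Since we are in the case $F_{k+1}\neq 0$ (we assumed $F_k \neq 0$ for all $k$), its leading monomial $m_{k+1}$ is the $\opord$-greatest monomial in its support; it is $\opord$-at most $m_k$ and, as just shown, it cannot equal $m_k$. Therefore $m_{k+1} \opord m_k$, i.e.\ the sequence $(m_k)_{k\in\N}$ is strictly decreasing for $\opord$.

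I expect the only delicate point to be bookkeeping with the two orders $<$ and $\opord$: one must be careful that "strictly decreasing for $\opord$" means precisely that consecutive leading monomials satisfy $m_{k+1}\opord m_k$, i.e.\ $m_k < m_{k+1}$ is false and they are distinct, and that the cancellation argument legitimately uses $\opord$ as the order with respect to which $\lm$ is a maximum. No deep idea is needed beyond the leading-monomial identities already listed; the argument is essentially the standard "one division step lowers the leading term" observation, transported verbatim from polynomials to formal power series, where it is valid because $\opord$ is a well-order on $[\X]$ and each $F_k$ has a well-defined leading monomial.
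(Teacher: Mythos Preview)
Your proof is correct and follows essentially the same approach as the paper. The paper makes the cancellation explicit by writing $F_{k+1}=\dfrac{\lc(F_k)}{\lc(s_{i_k})}\,q_k\,\rem(s_{i_k})-\rem(F_k)$ and then bounding $\lm(F_{k+1})$ by $\max_{\opord}\{q_k\lm(\rem(s_{i_k})),\lm(\rem(F_k))\}\opord m_k$, whereas you argue directly on supports and coefficients that $m_k$ is cancelled and all remaining monomials are $\leq_{\mathrm{op}} m_k$; these are two presentations of the same ``one division step lowers the leading term'' observation.
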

\begin{proof}
  By construction of $i_k$ and $q_k$, we find:
  \[q_k s_{i_k}=\lc(s_{i_k})\left(\lm(s_{i_k}) q_k\right)-
  q_k \rem(s_{i_k})=\lc(s_{i_k})m_k-q_k \rem(s_{i_k}).
  \smallskip\]
  Hence, since $F_k=\lc(F_k)m_k-\rem(F_k)$:
  \[\begin{split}
  F_{k+1}
  &=F_k-\frac{\lc(F_k)}{\lc(s_{i_k})}q_k s_{i_k}\\
  &=\lc(F_k)m_k-\rem(F_k)-\lc(F_k)m_k+\frac{\lc(F_k)}{\lc(s_{i_k})}q_k
  \rem(s_{i_k})\\ 
  &=\frac{\lc(F_k)}{\lc(s_{i_k})}q_k \rem(s_{i_k})-\rem(F_k).
  \end{split}
  \smallskip\]
  And thus, from the properties of leading monomials given earlier, we get
  \[m_{k+1}=\lm(F_{k+1})=\lm\left(\frac{\lc(F_k)}{\lc(s_{i_k})}q_k
  \rem(s_{i_k})-\rem(F_k)\right)\leq_{\mathrm{op}}
  \max_{\opord}\left\{q_k\lm(\rem(s_{i_k})),\lm(\rem(F_k))\right\},
  \]
  unless, either $\rem(s_{i_k})$ or $\rem(F_k)$ is zero, in which case one of
  the leading monomial is ill-defined. In that case, we have $m_{k+1}
  \leq_{\mathrm{op}} r$ where $r$ is either $q_k\lm(\rem(s_{i_k}))$ or
  $\lm(\rem(F_k))$ according to which of the remainders is well-defined; note
  how both remainders cannot be simultaneously zero since $F_{k+1}\neq 0$. It
  then follows from the properties of remainders that: 
  \begin{itemize}
  \item $q_k\lm(\rem(s_{i_k}))\opord q_k\lm(s_{i_k})=m_k$ if $\rem(s_{i_k})$
    is non-zero and, 
  \item $\lm(\rem(F_k))\opord\lm(F_k)=m_k$ if $\rem(F_k)$ is non-zero.
  \end{itemize}
  Hence, in the end, we get: $m_{k + 1}\opord m_k$.
\end{proof}
\smallskip

Let us now define a family of $\ell$ sequences
$\left(q_1^{(k)}\right)_{k\in\N}$, $\cdots$,
$\left(q_\ell^{(k)}\right)_{k\in\N}$ containing monomials or zeros as follows: 
for any $i\in\integers{1}{\ell}$ define $q_i^{(0)} = 0$ and then for any
$k\in\N$, fix:  
\[q_i^{(k+1)}:=\frac{\lc(s_{i_k})}{\lc(F_k)}\left(f_i^{(k+1)}-f_i^{(k)}\right).
\smallskip\]
Hence, for any $k\in\N$ and $i\in\integers{1}{\ell}$, two options arise:
\begin{itemize}
\item either $i\neq i_k$, in which case $f_i^{(k+1)}=f_i^{(k)}$ and thus
  $q_i^{(k+1)}=0$, 
\item or $i=i_k$ and then $f_i^{(k+1)}=f_i^{(k)}+\frac{\lc(F_k)}{\lc(s_{i_k})}
  q_k$, and so $q_i^{(k+1)}=q_k$.  
\end{itemize}
In other words, for any $i\in\integers{1}{\ell}$ and any $k\in\N$, the monomial
$q_i^{(k+1)}$ is non-zero if and only if the monomial $m_k$ has been
``eliminated'' at Step~\eqref{equ:q_k} by choosing $\lm(s_i)$. These sequences
then verify, for any $i\in\integers{1}{\ell}$ and any $k\in\N$: 
\[f_i^{(k)}=\sum_{\substack{j = 1 \\ i_{j-1} = i}}^{k}\coeff{f_i^{(k)},q_i^{(j)}} q_i^{(j)}.
\smallskip\]

This exhibits, with $i$ and $k$ fixed, that the subsequence $Q_i^{(k)}$ of
non-zero elements from $\left(q_i^{(j)}\right)_{1\leq j\leq k}$ is exactly the
support of the formal power series $f_i^{(k)}$. Moreover, this (finite) sequence
$Q_i^{(k)}$ is strictly decreasing for the opposite order $\opord$. Indeed,
either the sequence $Q_i^{(k)}$ is empty, either it contains a single monomial,
in both cases it is then obvious, or
$Q_i^{(k)}=\left(q_i^{(j_1)},\cdots,q_i^{(j_r)}\right)$ contains $r\geq 2$
monomials where $j_1<\cdots<j_r$ are indices in $\integers{1}{k}$. Note how, for
any $s\in\integers{1}{r - 1}$, $q_i^{(j_s)}>_{\mathrm{op}}q_i^{(j_{s+1})}$.
Indeed, by definition, we have $q_i^{(j_s)} = q_{j_s - 1}$ and $q_i^{(j_{s+1})}
= q_{j_{s+1} - 1}$. But, by construction of $Q_i^{(k)}$, the
indices chosen at the step~\eqref{equ:q_k} for $k_s:=j_s-1$ and
$k'_s:=j_{s+1}-1$ verify $i_{k_s}=i_{k'_s}=i$, and so we have the equalities
$m_{k_s}=\lm(s_i) q_{k_s}$ and $m_{k'_s} = \lm(s_i) q_{k'_s}$. But
$k_s<k'_s$ and the sequence $\left(m_k\right)_{k\in\N}$ is strictly decreasing
for $\opord$ from Lemma~\ref{lem:m_k-decreasing}, and so $\lm(s_i)
q_{k_s}>_{\mathrm{op}}\lm(s_i) q_{k'_s}$, which implies
$q_{k_s}>_{\mathrm{op}}q_{k'_s}$. 
\smallskip

Therefore, by what we just proved, for $i\in\integers{1}{\ell}$ fixed, the
sequence $\left(Q_i^{(k)}\right)_{k\in\N}$ consists of finite sequences of
monomials that are strictly decreasing for the opposite order $\opord$ such
that, moreover, for any $k_1 < k_2$, $Q_i^{(k_1)}$ is an initial segment of
$Q_i^{(k_2)}$. Hence, we can consider the (possibly infinite) sequence
$Q_i^{(\infty)}$ defined as follows: if the sequence
$\left(Q_i^{(k)}\right)_{k\in\N}$ is ultimately stationary, then define
$Q_i^{(\infty)}$ as that (necessarily finite) stationary sequence value the
sequence takes; otherwise, for any arbitrary large non-negative integer $L$,
there exists $k_L \in \N$ such that, for all $k \geq k_L$, the finite sequence
$Q_i^{(k)}$ contains at least $L$ monomials and, in that case, define the
(necessarily infinite) sequence $Q_i^{(\infty)}$ as the map that associates to
any $L \in \N$ the monomial at rank $L$ in any of the sequences $Q_i^{(k)}$ for
$k \geq k_L$: this is indeed independent of the choice of the sequence
$Q_i^{(k)}$ by the previous initial segments discussion and, thus, yields a
well-defined infinite sequence $Q_i^{(\infty)}$. Finally, by construction, the
sequence $Q_i^{(\infty)}$ of monomials, regardless of whether it is finite or
not, is strictly decreasing for the opposite order $\opord$.
\smallskip

These facts on these new sequences entail the following proposition. 
\medskip

\begin{proposition}\label{prop:cauchy-sequences}
  For any $i\in\integers{1}{\ell}$ fixed, the sequence
  $\left(f_i^{(k)}\right)_{k\in\N}$ is a Cauchy sequence. 
\end{proposition}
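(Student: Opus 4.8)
The plan is to control $\delta\big(f_i^{(k)},f_i^{(k')}\big)$ for $k'>k$ in terms of $\deg(m_k)$, and then to prove that $\deg(m_k)\to+\infty$; the Cauchy property is then immediate.

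First I would unwind the recursion. For each $j$, the increment $f_i^{(j+1)}-f_i^{(j)}$ is either $0$ (when $i\neq i_j$) or a nonzero scalar multiple of the single monomial $q_j$, which by \eqref{equ:q_k} satisfies $\deg(q_j)=\deg(m_j)-\deg(\lm(s_i))\ge\deg(m_j)-D$, where $D:=\max_{1\le r\le\ell}\deg(\lm(s_r))$. Telescoping over $j$, the difference $f_i^{(k')}-f_i^{(k)}$ is thus supported on $\{q_j : k\le j<k',\ i_j=i\}$, so every monomial occurring in it has degree at least $\min_{k\le j<k'}\deg(m_j)-D$.

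Next I would show that $(\deg(m_k))_{k\in\N}$ is non-decreasing and unbounded, hence tends to $+\infty$. By Lemma~\ref{lem:m_k-decreasing} the sequence $(m_k)$ is strictly decreasing for $\opord$, hence strictly increasing for $<$, and in particular the $m_k$ are pairwise distinct. Compatibility of $<$ with the degree forbids $\deg(m_k)>\deg(m_{k+1})$ (which would force $m_{k+1}<m_k$), so the degrees are non-decreasing; and were they bounded by some $d$, all the $m_k$ would lie in the finite set of monomials of $[\X]$ of degree at most $d$, contradicting pairwise distinctness. Hence $\deg(m_k)\to+\infty$, and by monotonicity $\min_{k\le j<k'}\deg(m_j)=\deg(m_k)$.

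Combining the two steps, $\val\big(f_i^{(k')}-f_i^{(k)}\big)\ge\deg(m_k)-D$ for all $k'>k$ (the inequality being vacuous when the difference vanishes), so $\delta\big(f_i^{(k)},f_i^{(k')}\big)\le 2^{D-\deg(m_k)}\to 0$ as $k\to\infty$, uniformly in $k'$, which is precisely the Cauchy condition. The one delicate point is the degree-divergence of $(m_k)$: one has to observe that being ``compatible with the degree'' upgrades strict $<$-monotonicity into non-decreasing degrees, after which the finiteness of the sets of monomials of bounded degree finishes the argument. Equivalently, the same fact can be read off the sequences $Q_i^{(\infty)}$ constructed above, whose monomials are decreasing for $\opord$ and therefore of non-decreasing and unbounded --- hence divergent --- degree; this bounds the valuation of the tails $Q_i^{(k')}\setminus Q_i^{(k)}$, on which $f_i^{(k')}-f_i^{(k)}$ is supported. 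Everything else is routine bookkeeping with the explicit recursion.
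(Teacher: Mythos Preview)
Your proof is correct and follows essentially the same approach as the paper: telescope $f_i^{(k')}-f_i^{(k)}$ into a sum of scalar multiples of the monomials $q_j$ (for those $j$ with $i_j=i$), and then argue that the degrees of the relevant monomials go to infinity using Lemma~\ref{lem:m_k-decreasing} together with compatibility of $<$ with the degree and finiteness of the set of monomials of bounded degree.

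The one difference worth noting is a packaging choice. The paper tracks the degrees of the $q_i^{(j)}$'s themselves via the auxiliary sequence $Q_i^{(\infty)}$ and defines an explicit $K_\varepsilon$ in terms of those degrees. You instead bound $\deg(q_j)\ge\deg(m_j)-D$ with the uniform shift $D=\max_{r}\deg(\lm(s_r))$ and reduce everything to the divergence of $\deg(m_k)$, which you read off directly from Lemma~\ref{lem:m_k-decreasing}. This is a mild streamlining: it avoids the $Q_i^{(\infty)}$ bookkeeping entirely and gives the clean uniform estimate $\delta\big(f_i^{(k)},f_i^{(k')}\big)\le 2^{D-\deg(m_k)}$. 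Conversely, the paper's route has the small advantage of not needing the constant $D$ and of yielding, for each fixed $i$, the sharper bound governed by $\deg(\lm(s_i))$ alone. Either way the content is the same.
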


\begin{proof}
  Using the definition of the sequence $\left(q_i^{(k)}\right)_{k\in\N}$, we
  obtain 
  \[f_i^{(k+1)}-f_i^{(k)}=\frac{\lc(F_k)}{\lc(s_{i_k})}q_i^{(k+1)},
  \]
  in such a way that for any positive integers $k_1<k_2$, we get:
  \begin{equation}\label{equ:f_i:k_2-k_1}
    \begin{split}
      f_i^{(k_2)}-f_i^{(k_1)}
      &=\sum_{j = k_1}^{k_2-1}\left(f_i^{(j+1)}-f_i^{(j)}\right)
      =\sum_{j = k_1}^{k_2-1}\frac{\lc(F_j)}{\lc(s_{i_j})}q_i^{(j+1)}.
    \end{split}
  \end{equation}
  Thus, either the sequence $\left(f_i^{(k)}\right)_{k\in\N}$ is ultimately
  stationary, it is the case if and only if the index $i$ has been chosen only
  finitely many times among the infinite times we went through the
  step~\eqref{equ:q_k}, and in which case the sequence is obviously a Cauchy
  sequence. Or the sequence is not ultimately stationary and then, for any $k_1
  \in \N$, there will always exist $k_2>k_1$ such that
  $\delta\left(f_i^{(k_2)},f_i^{(k_1)}\right)>0$. However, for any real number
  $\varepsilon>0$, we can fix
  \[K_\varepsilon:=\min\left\{k\in\N\setminus\{0\}\com q_i^{(k)}\neq 0\
  \text{and}\ \deg\left(q_i^{(k)}\right)>
  \log_2\left(\frac1\varepsilon\right)\right\}.
  \smallskip\]
  The integer $K_\varepsilon$ is well-defined since
  $\left(f_i^{(k)}\right)_{k\in\N}$ is assumed to be not ultimately stationary
  and the sequence $Q_i^{(\infty)}$, which contains all non-zero $q_i^{(k)}$, is
  strictly decreasing for $\opord$, so that the degrees of the monomials from
  $Q_i^{(\infty)}$ are unbounded because there are finitely many variables and
  the order is compatible with the degree. It follows that, for any $k_2>k_1\geq
  K_\varepsilon$, either $f_i^{(k_2)}=f_i^{(k_1)}$ in which case we have 
  $\delta\left(f_i^{(k_2)},f_i^{(k_1)}\right)=0<\varepsilon$, or, the 
  monomial $\lm\left(f_i^{(k_2)}-f_i^{(k_1)}\right)$ is well-defined and
  satisfies, by Formula~\eqref{equ:f_i:k_2-k_1}: 
  \[\lm\left(f_i^{(k_2)}-f_i^{(k_1)}\right)\leq\max_{\opord}
  \left\{q_i^{(j+1)}\com q_i^{(j+1)}\neq 0\ \text{and}\ k_1\leq j<k_2\right\}.
  \smallskip\]
  This maximum is well-defined because the set of the right hand side of the
  inequality is finite and non-empty since $f_i^{(k_2)}\neq
  f_i^{(k_1)}$. Denoting by $q_i^{(j_0+1)}$ that maximum, we get $j_0+1>k_1\geq
  K_\varepsilon$. Since the sequence $Q_i^{(\infty)}$ is strictly decreasing for
  $\opord$, we have $q_i^{(j_0+1)}\opord q_i^{(K_\varepsilon)}$, and thus
  \[\deg\left(q_i^{(j_0+1)}\right)\geq\deg\left(q_i^{(K_\varepsilon)}\right)>
  \log_2\left(\frac1\varepsilon\right),
  \smallskip\]
  because the order is compatible with the degree. Since
  $q_i^{(j_0+1)}=\lm\left(f_i^{(k_2)}-f_i^{(k_1)}\right)$, the formula means 
  \[\delta\left(f_i^{(k_2)},f_i^{(k_1)}\right)<\varepsilon.
  \smallskip\]
  Since this is true for every $k_2>k_1$ and every $\varepsilon>0$, the sequence
  $\left(f_i^{(k)}\right)_{k\in\N}$ is a Cauchy sequence.
\end{proof}
\smallskip

From Proposition~\ref{prop:cauchy-sequences}, since $\KXX$ is Cauchy-complete,
for any $i\in\integers{1}{\ell}$, the sequence~$\left(f_i^{(k)}\right)_{k\in\N}$
converges to a limit we denote $f_i^{(\infty)}$. 
\begin{definition}
  The elements $\left(f_1^{(\infty)},\cdots,f_\ell^{(\infty)}\right)$ are called
  the {\em limit coefficients of $f$ relative to $G$}.
\end{definition}

We are now able to prove the main result of the section.
\smallskip

\begin{theorem}\label{thm:any-ideal-is-closed}
  Let $I\subseteq\KXX$ be a formal power series ideal, $G=\{s_1,\cdots,s_\ell\}$
  be a finite standard basis of $I$ with respect to a monomial order which is
  compatible with the degree, and $f$ be an element in the topological closure
  of $I$ for the $(\X)$-adic topology. Then, the limit
  coefficients~$\left(f_1^{(\infty)},\cdots,f_\ell^{(\infty)}\right)$ of $f$
  relative to $G$ verify: 
  \[f=f_1^ {(\infty)}s_1+\cdots+f_\ell^{(\infty)}s_\ell.\]
  In particular, $
  I$ is closed for the $(\X)$-adic topology.
\end{theorem}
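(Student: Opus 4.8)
The plan is to assemble the pieces already developed in this subsection and show that the limit coefficients do the job. By Proposition~\ref{prop:cauchy-sequences} and the Cauchy-completeness of $\KXX$, each sequence $\bigl(f_i^{(k)}\bigr)_{k\in\N}$ converges to $f_i^{(\infty)}$, so the partial sums $\sum_{i=1}^{\ell} f_i^{(k)} s_i$ converge to $\sum_{i=1}^{\ell} f_i^{(\infty)} s_i$ as $k\to\infty$, using that multiplication by a fixed $s_i$ is continuous for $\topFPS$ (it cannot decrease valuation) and that finite sums are continuous. Hence $F_k = f - \sum_{i=1}^{\ell} f_i^{(k)} s_i$ converges to $f - \sum_{i=1}^{\ell} f_i^{(\infty)} s_i$. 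It therefore suffices to prove that $F_k \to 0$, i.e. that $\val(F_k)\to\infty$, equivalently $\deg(m_k)\to\infty$ where $m_k = \lm(F_k)$.

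To get this, I would invoke Lemma~\ref{lem:m_k-decreasing}: the sequence $(m_k)_{k\in\N}$ is strictly decreasing for $\opord$, i.e. strictly increasing for $<$. Since $<$ is a well-order this already forbids an infinite strictly decreasing chain for $<$ — wait, that would say the process terminates, so I must be careful: the point is rather that an \emph{infinite} strictly $<$-increasing sequence of monomials, since $<$ is compatible with the degree and there are only finitely many monomials of each degree, must have $\deg(m_k)\to\infty$. Indeed, for each fixed bound $d$ there are finitely many monomials of degree $\le d$, and since the $m_k$ are pairwise distinct (strict monotonicity), only finitely many of them can have degree $\le d$; hence $\deg(m_k)\to\infty$, so $\val(F_k)\ge\deg(m_k)\to\infty$ and $F_k\to 0$ in $\topFPS$. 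Combining with the previous paragraph gives $f = \sum_{i=1}^{\ell} f_i^{(\infty)} s_i$.

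Finally, since each $s_i\in I$ and $I$ is an ideal, $\sum_{i=1}^{\ell} f_i^{(\infty)} s_i \in I$, so $f\in I$. As $f\in\olI$ was arbitrary, $\olI\subseteq I$; the reverse inclusion $I\subseteq\olI$ is automatic, so $I=\olI$ and $I$ is closed for the $(\X)$-adic topology. (The edge case where $F_k = 0$ for some $k$ was already handled in the inductive construction: there the coefficients stabilize and the identity $f = \sum f_i^{(k)} s_i$ holds exactly, so we may assume the process runs forever, which is precisely the regime in which Proposition~\ref{prop:cauchy-sequences} and the argument above apply.)

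The step I expect to be the only real subtlety is the continuity/interchange-of-limits argument: justifying that $\lim_k \sum_{i} f_i^{(k)} s_i = \sum_i \bigl(\lim_k f_i^{(k)}\bigr) s_i$. This is routine once one notes $\val(g h)\ge \val(g)+\val(h)\ge\val(g)$ and $\val(g_1+g_2)\ge\min(\val(g_1),\val(g_2))$, so that $\delta\bigl(\sum_i f_i^{(k)} s_i,\ \sum_i f_i^{(\infty)} s_i\bigr)\le \max_i \delta\bigl(f_i^{(k)} s_i,\ f_i^{(\infty)} s_i\bigr)\le \max_i \delta\bigl(f_i^{(k)},f_i^{(\infty)}\bigr)\to 0$; but it is the place where the completeness of $\KXX$ and the metric estimates genuinely enter, so I would write it out rather than leave it implicit.
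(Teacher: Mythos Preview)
Your proof is correct and follows essentially the same approach as the paper: use continuity of the algebraic operations to pass from $F_k\to 0$ to $f=\sum_i f_i^{(\infty)}s_i$, and deduce $F_k\to 0$ from Lemma~\ref{lem:m_k-decreasing} together with the fact that, since $<$ is compatible with the degree and there are only finitely many monomials of each degree, a strictly $\opord$-decreasing sequence of monomials has degrees tending to infinity. Your explicit ultrametric estimates for the interchange-of-limits step are a slight elaboration of what the paper condenses into the phrase ``by continuity of algebraic operations''; note also that in fact $\val(F_k)=\deg(m_k)$ exactly, not merely $\ge$, since $m_k$ is the $<$-minimal monomial in $\supp(F_k)$ and degree-compatibility forces it to have the minimal degree.
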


\begin{proof}
  By continuity of algebraic operations, the sequence
  $\left(f-\sum_{i=1}^\ell f_i^{(k)}s_i\right)_{k\in\N}$ converges to
  \[\lim_{k\to\infty}\left(f-\sum_{i=1}^\ell f_i^{(k)}s_i\right)=
  f-\sum_{i=1}^\ell f_i^{(\infty)}s_i.
  \smallskip\]
  But then, on one hand:
  \[\lim_{k\to\infty}\delta\left(f-\sum_{i = 1}^\ell f_i^{(k)}s_i,0\right)=
  \frac{1}{2^{\lim_{k\to\infty}\deg\left(\lm\left(f-\sum_{i=1}^\ell
      f_i^{(k)}s_i\right)\right)}}=\frac{1}{2^{\lim_{k\to\infty}\deg(m_k)}}.
  \smallskip\]
  On the other hand, Lemma~\ref{lem:m_k-decreasing} shows that the sequence
  $\left(m_k\right)_{k\in\N}$ is strictly decreasing for $\opord$. Thus, since
  the order is compatible with the degree and since we have finitely many
  indeterminates, the sequence
  $\left(\deg(m_k)\right)_{k\in\N}$ tends to infinity, and thus, we have: 
  \[0= \lim_{k\to\infty}\delta\left(f-\sum_{i=1}^\ell f_i^{(k)}s_i,0\right)
  =\delta\left(\lim_{k\to\infty}\left(f-\sum_{i=1}^\ell f_i^{(k)}s_i\right),0 
  \right) = \delta\left( f-\sum_{i=1}^\ell f_i^{(\infty)}s_i,0
  \right).
  \smallskip\]
We finally get $f=f_1^{(\infty)}s_1+\cdots+ f_\ell^{(\infty)} s_\ell$, which
  proves that $f$ belongs to $I$. 
\end{proof}

\bigskip

\noindent
{\large\textbf{3.2. A counter-example in the non-commutative case}}
\addcontentsline{toc}{subsection}{3.2. A counter-example in the non-commutative
  case}  
\setcounter{subsection}{2}
\setcounter{theorem}{0}
\medskip

In this subsection, we recall from~\cite{Gerritzen98} that ideals of
non-commutative formal power series algebras are not necessarily
closed. We also explain why the proof given in Subsection 3.1 does not
translate in the non-commutative case.
\smallskip

Non-commutative formal power series are constructed in the same way as 
commutative ones, where we replace the polynomial algebra $\KX$ by the tensor
algebra $\KXnc$ over the vector space with basis $\{\X\}$. As in the commutative 
case, the distance $\delta$ on $\KXnc$ is defined by formula
$\delta(f,g):=2^{-\val(f-g)}$, and $\KXXnc$ is the Cauchy-completion of $\KXnc$
for $\delta$.
\smallskip

Let $\Kxy$ be the algebra of non-commutative formal power series in two
variables $x$ and $y$ and consider the two-sided ideal $I$ generated by
$y$. Then, from the proof of~\cite[Lemma 1.2]{Gerritzen98}, the series
\[\sum_{n\in\N}x^nyx^n,
\smallskip\]
does not belong to $I$, but it belongs to the topological closure $\olI$ of $I$
for the $(x,y)$-adic topology induced by $\delta$. That shows that $I$ is
not closed in $\Kxy$ for the $(x,y)$-adic topology.
\smallskip

Notice how this situation would not arise in the commutative case. Indeed, we
have for commutative monomials: 
\[\sum_{n\in\N}x^nyx^n=\sum_{n\in\N}x^{2n}y=\left(\sum_{n\in\N}x^{2n}\right)y\in I.
\smallskip\]

The first point where the procedure described in the previous section fails to
translate in the non-commutative case is that a non-commutative formal power
series ideal does not necessarily admit a finite standard basis. However, even
if such a finite standard basis exists, our procedure still does not translate 
in the non-commutative setting: indeed, Step~\eqref{equ:q_k} would require two
monomials $q_k^{\mathrm{left}}$ and $q_k^{\mathrm{right}}$ to factorise the
leading monomial $m_k$ instead of a single one; furthermore, bookkeeping of the
factorisations of reduced monomials is made much harder in the sense that, in
contrast with the commutative case where a single formal power series
$f_i^{(k)}$ is enough for each $i$ and each $k$ to keep track of all the
previous factorisations of reduced monomials up to step $k$, in the worst case,
as examplified by the counter-example above, we have to keep track of each
factorisation independently of each other, giving rise to two $k$-tuples of
formal power series $\left(L_i^{(1)}, {\cdots}, L_i^{(k)}\right)$ and
$\left(R_i^{(1)}, {\cdots}, R_i^{(k)}\right)$, for each $i \in
\integers{1}{\ell}$, satisfying:  
\[F_k=f-\sum_{i = 1}^{\ell}\sum_{j=1}^kL_i^{(j)} s_i R_i^{(j)}.
\]
With that in mind, in the case where none of the $F_k$'s are zero, considering
the limit process, we would have
\[f=\sum_{i=1}^{\ell}\sum_{j=1}^{\infty}L_i^{(j)} s_i R_i^{(j)}.
\]
However, this is not a cofactor representation of $f$ with respect to the
$s_i$'s since it is not a finite combination of the $s_i$'s with formal power
series coefficients.
\bigskip

\begin{center}
  \bf\large 4. APPLICATION TO TOPOLOGICAL REWRITING THEORY
  \addcontentsline{toc}{section}{4. Application to topological rewriting theory}
\end{center}
\setcounter{section}{4}
\bigskip

In Section 4, we show that topological confluence and infinitary confluence 
are equivalent notions for rewriting over formal power series. These two
properties provide characterisations of standard bases.  
\bigskip

\noindent
{\large\textbf{4.1. Topological and infinitary confluence}}
\addcontentsline{toc}{subsection}{4.1. Topological and infinitary confluence} 
\setcounter{subsection}{1}
\setcounter{theorem}{0}
\medskip

In this subsection, we recall the definition of topological rewriting systems as 
well as various notions of confluence associated with them. We also show how
these notions are related to each other.
\smallskip

We recall that a {\em topological rewriting system} is a triple $(A,\tau,\to)$,
where $(A,\tau)$ is a topological space and $\to\ \subseteq A\times A$ is a
binary relation on $A$, called {\em rewriting relation}. We denote by
$\transRew$ the reflexive transitive closure of $\to$ and $\topRew$ the
topological closure of $\transRew$ for the product topology $\dTop{A}\times\tau$
on $A\times A$, where $\dTop{A}$ is the discrete topology on $A$. In other
words, we have $a\transRew b$ if and only if there exist an integer $k\in\N$ and
elements $a_0,\cdots,a_k\in A$, such that $a=a_0\to a_1\to\cdots\to a_k=b$. The
integer $k$ is called the {\em length} of the sequence and the case $k=0$ means
that $a=b$. Moreover, we have $a\topRew b$ if and only if every neighbourhood
$V$ of $b$ for $\tau$ contains $b'\in V$ such that $a\transRew b'$. An element
$a\in A$ is called a {\em normal form} for $\to$ if there is no $b\in A$ such
that $a \to b$.
\medskip

\begin{definition}
  Let $(A,\tau,\to)$ be a topological rewriting system.
  \begin{enumerate}
  \item The rewriting relation $\to$ is said to be {\em confluent} if, for every
    $a,b,c\in A$ such that we have $a\transRew b$ and $a\transRew c$,
    there exists $d\in A$ such that $b\transRew d$ and $c\transRew d$:
    \[\begin{tikzcd}
    & a\ar[dl, "\star"']\ar[dr, "\star"] &\\
    b\ar[dr, dashed, "\star"'] & & c\ar[dl, dashed, "\star"]\\
    & d &
    \end{tikzcd}\]
  \item The rewriting relation $\to$ is said to be {\em $\tau$-confluent} if, for
    every $a,b,c\in A$ such that we have $a\transRew b$ and $a\transRew c$,
    there exists $d\in A$ such that $b\topRew d$ and $c\topRew d$:
    \[\begin{tikzcd}
    & a\ar[dl, "\star"']\ar[dr, "\star"] &\\
    b\ar[dr, dashed, circle] & & c\ar[dl, dashed, circle]\\
    & d &
    \end{tikzcd}\]
  \item The rewriting relation $\to$ is said to be {\em infinitary confluent}
      if, for every $a,b,c\in A$ such that we have $a\topRew b$ and $a\topRew
      c$, there exists $d\in A$ such that $b\topRew d$ and $c\topRew d$:
    \[\begin{tikzcd}
    & a\ar[dl, circle]\ar[dr, circle] &\\
    b\ar[dr, dashed, circle] & & c\ar[dl, dashed, circle]\\
    & d &
    \end{tikzcd}\]
  \end{enumerate}
\end{definition}

\begin{remark} \label{rmk:NF}
    Let us highlight one consequence of infinitary confluence that will
    subsequently be used to provide counter-examples. Assume that the
    topological space $(A, \tau)$ is a $T_1$-space, that is to say, for any $a
    \in A$, the intersection of all neighbourhoods of $a$ is exactly $\{a\}$.
    Then, infinitary confluence of the topological rewriting system $(A, \tau,
    \to)$ implies that, for any $a,b,c\in A$, where $b$ and $c$ are normal forms
    for $\to$ such that $a \topRew b$ and $a \topRew c$, we have necessarily $b
    = c$. Indeed, by infinitary confluence, we know there exists $d \in A$ such
    that $b \topRew d$ and $c \topRew d$. That is to say, for any neighbourhood
    $U$ of $d$ in $(A, \tau)$, there exist $b', c' \in U$ such that $b \transRew
    b'$ and $c \transRew c'$. However, since $b$ and $c$ are, by hypothesis,
    normal forms for $\to$, then $b = b'$ and $c = c'$. In other words, we have
    that $b$ and $c$ belong to any neighbourhood of $d$ in $(A, \tau)$. Finally,
    by assumption, $(A, \tau)$ is a $T_1$-space, and, hence, $b = d = c$.
\end{remark}

Since $\transRew\ \subseteq\topRew$, confluence and infinitary confluence both
imply $\tau$-confluence. The converse implications are both false in general. A
topologically confluent rewriting system that is not confluent is given by
Example~\ref{equ:inf_conf_does_not_imply_conf} which requires to recall how
topological rewriting systems over formal power series are defined. Examples of
topologically confluent rewriting systems that are not infinitary confluent are
given in Example~\ref{ex:cyclic},~\ref{ex:Nbar}
and~\ref{equ:inf_conf_does_not_imply_conf}. 

\begin{example} \label{ex:cyclic}
  Consider the following counter-example, given by $X=[0,2]\subseteq\R$ with the
  usual topology $\tau$ and $\to$ is defined by
  \[\begin{tikzcd}
  \frac{1}{2^{n+1}}\ar[r, bend left] & \frac{1}{2^n}\ar[l, bend left] &
  \text{and} &
  2-\frac{1}{2^n}\ar[r, bend left] & 2-\frac{1}{2^{n+1}},\ar[l, bend left]
  \end{tikzcd}
  \smallskip\]
  for every $n\in\N$. Hence, we have:
  \[\begin{tikzcd}
  0 &
  \cdots\ar[r, bend left]\ar[l, circle] &
  \frac18\ar[r, bend left]\ar[l, bend left] &
  \frac14\ar[l, bend left]\ar[r, bend left] &
  \frac12\ar[l, bend left]\ar[r, bend left] &
  1\ar[l, bend left]\ar[r, bend left] &
  \frac32\ar[l, bend left]\ar[r, bend left] &
  \frac74\ar[l, bend left]\ar[r, bend left] &
  \frac{15}{8}\ar[l, bend left]\ar[r, bend left] &
  \cdots\ar[l, bend left]\ar[r, circle] & 2
  \end{tikzcd}
  \smallskip\]
  This rewriting relation is confluent, hence $\tau$-confluent, because every
  finite rewriting sequence can be reversed, but it is not infinitary confluent
  according to Remark~\ref{rmk:NF}. Indeed, the space $X$ is a $T_1$-space
  (being a subspace of the real line) and $0$ and $2$ are two distinct normal
  forms for $\to$ such that we both have $1 \topRew 0$ and $1 \topRew 2$.
  \end{example}

In the counter-example provided in Example~\ref{ex:cyclic}, $\to$ is
$\tau$-confluent in part because it is cyclic, \ie, we have rewriting loops
$a\transRew a$ of length at least $1$. Hence, one can naturally wonder if an
example of a rewriting relation with no rewriting loop of length at least $1$ 
over a $T_1$-space, that is topologically confluent but not 
infinitary confluent exists. The answer is yes, as shown by the following
example. 
\medskip

\begin{example}\label{ex:Nbar}
  Consider $X=\left(\Ninfty\right)\times\left(\Ninfty\right)$, equipped with the
  product topology $\tau$ of the order topology over $\Ninfty$. This is a
  Hausdorff space and, hence, a $T_1$-space. A basis for the order topology over
  $\Ninfty$ is given by the sets  
  \[\{n\in\N\com a<n<b\},\qquad \{n\in\N\com n<b\},\qquad
  \{n\in\Ninfty\com a<n\},
  \smallskip\]
  where $a\in\N$ and $b\in\Ninfty$. Then, consider the rewriting relation $\to$
  on $X$ given by
  \[(n,m)\to(n+1,m)\andquad (n,m)\to (n,m+1),
  \smallskip\]
  whenever $n,m\in\N$. The rewriting relation $\to$ is confluent, hence
  $\tau$-confluent, because we have a rewriting path $(n,m)\transRew (n',m')$ if
  and only if $n,n',m,m'\in\N$ are such that $n\leq n'$ and $m\leq m'$, so that
  we have confluence diagrams:
  \[\begin{tikzcd}
  & (n,m)\ar[dl, "\star"']\ar[dr, "\star"] &\\
  \left(n_1,m_1\right)\ar[dr, dashed, "\star"'] & &
  \left(n_2,m_2\right)\ar[dl, dashed, "\star"]\\
  & \left(\max\left(n_1,n_2\right),\max\left(m_1,m_2\right)\right) &
  \end{tikzcd}\]
  But $\to$ is not infinitary confluent since we have
  \[\begin{tikzcd}
  (\infty,0) & \cdots\ar[l, circle] & (2,0)\ar[l] & (1,0)\ar[l] &
  (0,0)\ar[l]\ar[r] & (0,1)\ar[r] & (0,2)\ar[r] & \cdots\ar[r, circle] &
  (0,\infty),
  \end{tikzcd}
  \smallskip\]
  and $(\infty,0)$ and $(0,\infty)$ are distinct normal forms for $\to$ such
  that $(0, 0) \topRew (\infty,0)$ and $(0,0) \topRew (0,\infty)$ which,
  by Remark~\ref{rmk:NF}, would need to be equal if the system was infinitary
  confluent.
\end{example}
\bigskip

\noindent
{\large\textbf{4.2. Confluence for rewriting on formal powers series}}
\addcontentsline{toc}{subsection}{4.2. Confluence for rewriting on formal powers
  series} 
\setcounter{subsection}{2}
\setcounter{theorem}{0}
\medskip

In this subsection, we recall the construction of topological rewriting systems
over formal power series and prove that in this setting, $\tau$-confluence and 
infinitary confluence are equivalent properties. 
\smallskip

As in the beginning of the paper, we fix a finite set of indeterminates
$\{\X\}$. For a given monomial order $<$ on $[\X]$ that is compatible with the 
degree, and a fixed set $G$ of non-zero formal power series, we define the
following rewriting relation $\to$ on $\KXX$:  
\[\lambda\big(m\lm(s)\big)+S\to
\frac{\lambda}{\lc(s)}\big(m \rem(s)\big)+S,
\smallskip\]
where:
\begin{itemize}
\item $\lambda\in\K\setminus\{0\}$ is a non-zero scalar,
\item $m\in [\X]$ is a monomial,
\item $s\in G$ is a non-zero formal power series, 
\item $S\in\KXX$ is a formal power series such that $m\lm(s) \not \in \supp(S)$.   
\end{itemize}
We get a topological rewriting system $\left(\KXX,\topFPS,\to\right)$, where
$\topFPS$ is the $(\X)$-adic topology induced by the metric $\delta$ defined in
\eqref{equ:delta}. Recall from~\cite[Theorem 4.1.3]{Chenavier20} that a subset
$G$ of $\KXX$ is a standard basis of the ideal it generates with respect to the
monomial order $<$ if and only if the rewriting relation $\to$ is
$\topFPS$-confluent. In this setting, we are able to introduce a topological
rewriting system that is topologically confluent but not confluent.  
\smallskip

\begin{example}\label{equ:inf_conf_does_not_imply_conf}
  Consider the formal power series algebra $\K[[x,y,z]]$ with the deg-lex
  monomial order induced by $x>y>z$; in particular, it is compatible with the
  degree. Moreover, we let
  \[G=\left\{z-y,z-x,y-y^2,x-x^2\right\}.\]
  Then, every element of $I(G)$ has zero constant coefficient, so leading
  monomials of non-zero elements of $I(G)$ are either divisible by
  $z=\lm(z-x)=\lm(z-y)$, by $y=\lm\left(y-y^2\right)$, or by
  $x=\lm\left(x-x^2\right)$, proving that $G$ is a standard basis of
  $I(G)$ with respect to $<$. In particular, the topological rewriting system
  on $\K[[x,y,z]]$ induced by $G$ is $\topFPS$-confluent. However, it is not
  confluent because $z$ both rewrites into $y$ and $x$, and finite rewriting
  sequences starting with $y$ and $x$ finish with some powers of $y$ and $x$,
  respectively. These observations are summarised in the following diagram:
  \[\begin{tikzcd}
  & y\ar[r] & y^2\ar[r] & \cdots\ar[r] & y^n\ar[rd, bend left, circle]\\
  z\ar[ru, bend left]\ar[rd, bend right] & & & & & 0\\
  & x\ar[r] & x^2\ar[r] & \cdots\ar[r] & x^n\ar[ru, bend right, circle]
  \end{tikzcd}\]
\end{example}

In Theorem~\ref{thm:top_confluence_implies_inf_confluence}, we show the main
result of Section 4, stating that infinitary confluence and topological
confluence are equivalent properties in the context of formal power
series. For that, we fix a subset $G\subseteq\KXX$ and we denote by $I$ the
formal power series ideal generated by $G$:
\[I:=I(G)\subseteq\KXX.
\smallskip\]   
We first establish the following result, which is the topological adaptation 
of a well-known result in the context of Gröbner bases theory~\cite[Theorem
  8.2.7]{Baader98}. Note that the topological closure of formal power series
ideals is used at the end of the proof. 
\medskip

\begin{proposition}\label{prop:f-h-in-I-bar}
  For all $f,g\in\KXX$, if $f\topRew g$, then $f-g\in I$.
\end{proposition}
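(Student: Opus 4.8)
The plan is to proceed by induction on the ``length'' of the topological rewriting sequence witnessing $f \topRew g$. Recall that $f \topRew g$ means that every neighbourhood $V$ of $g$ for $\topFPS$ contains some $g'$ with $f \transRew g'$. Since $\topFPS$ is metrisable by $\delta$, this is equivalent to saying there exists a sequence $(g_j)_{j \in \N}$ of formal power series with $f \transRew g_j$ for every $j$ and $g_j \to g$ for $\delta$. So the first step is to reduce the claim to the case of ordinary finite rewriting: it suffices to show that if $f \transRew g'$ then $f - g' \in I$, and then to pass to the limit using the fact that $I$ is closed for $\topFPS$ by Theorem~\ref{thm:any-ideal-is-closed}.

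For the finite case, I would argue by induction on the length $k$ of the rewriting sequence $f = a_0 \to a_1 \to \cdots \to a_k = g'$. When $k = 0$ we have $f = g'$ and $f - g' = 0 \in I$ trivially. For the inductive step, it is enough to treat a single rewriting step $a \to b$ and show $a - b \in I$: then $f - g' = (f - a_{k-1}) + (a_{k-1} - g')$, where $f - a_{k-1} \in I$ by the induction hypothesis and $a_{k-1} - g' \in I$ by the single-step case, so $f - g' \in I$ since $I$ is an ideal (in particular closed under addition). For the single step, by the definition of $\to$ on $\KXX$ given just above, $a = \lambda\big(m\lm(s)\big) + S$ and $b = \frac{\lambda}{\lc(s)}\big(m\rem(s)\big) + S$ for some $\lambda \in \K\setminus\{0\}$, $m \in [\X]$, $s \in G$, and $S \in \KXX$ with $m\lm(s) \notin \supp(S)$. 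Then, using $\rem(s) = \lc(s)\lm(s) - s$, we compute
\[
a - b = \lambda\big(m\lm(s)\big) - \frac{\lambda}{\lc(s)}\big(m\rem(s)\big) = \frac{\lambda}{\lc(s)}\,m\big(\lc(s)\lm(s) - \rem(s)\big) = \frac{\lambda}{\lc(s)}\, m\, s,
\]
which lies in $I$ since $s \in G \subseteq I$ and $I$ is an ideal. This settles the finite case.

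Finally, to handle the genuinely topological rewriting $f \topRew g$, take the sequence $(g_j)_{j \in \N}$ with $f \transRew g_j$ for all $j$ and $g_j \to g$ in $\topFPS$. By the finite case just established, $f - g_j \in I$ for every $j$. Since algebraic operations are continuous for $\delta$, the sequence $(f - g_j)_{j \in \N}$ converges to $f - g$; hence $f - g$ belongs to the topological closure $\olI$ of $I$. By Theorem~\ref{thm:any-ideal-is-closed}, $\olI = I$ (here we use that $G$, or rather some finite standard basis of $I$, is available and that the monomial order is compatible with the degree, which is part of the standing hypotheses of this subsection), so $f - g \in I$, as desired.

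The only subtle point — and the reason the proposition is nontrivial and the paper flags the use of topological closure ``at the end of the proof'' — is precisely the last step: finite rewriting only gives membership of $f - g$ in $\olI$, not in $I$, and closing this gap is exactly what Theorem~\ref{thm:any-ideal-is-closed} provides. Everything else is a routine induction plus the elementary single-step computation above.
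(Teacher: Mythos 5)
Your proof is correct and follows essentially the same route as the paper's: reduce to a single rewriting step, where $a-b=\frac{\lambda}{\lc(s)}\,m\,s\in I$, extend to finite sequences by induction, and then pass to the limit using continuity of the algebraic operations together with the closedness of $I$ from Theorem~\ref{thm:any-ideal-is-closed}. The only difference is cosmetic: you write out the inductive decomposition $f-g'=(f-a_{k-1})+(a_{k-1}-g')$ explicitly, whereas the paper leaves this routine step implicit.
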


\begin{proof}
  First, if $g=f$, then there is nothing to prove. Second, if $f\to g$, then we
  have 
  \[f=\lambda(m\lm(s))+S, \qquad g=\frac{\lambda}{\lc(s)}(m \rem(s))+S,
  \]
  for $\lambda\in\K\setminus\{0\}$, $m\in [\X]$, $s\in G$, and $S\in\KXX$
  such that $m\lm(s)\notin\supp(S)$. Cancellations ensue in the computation 
  of $f-g$, and we obtain:  
  \[f-g=\frac{\lambda}{\lc(s)}m  s.
  \]
  But $s\in G\subseteq I$ and $I$ is an ideal, therefore $f-g\in I$. Third, if
  $f\transRew g$ and $f\neq g$, then, by induction on the length $k\geq 1$ of
  the rewriting sequence $f=f_0\to f_1\to\cdots\to f_k=g$, we have $f-g\in I$.
  Finally, if we have $f\topRew g$, then, for every integer $k\in\N$, there
  exists $f_k\in\KXX$ such that 
  \[\delta(f_k,g)<\frac{1}{2^k},
  \smallskip\]
and  $f\transRew f_k$. The sequence $\left(f_k\right)_{k\in\N}$ then converges
to $g$. From the third case treated in the proof, for every $k\in\N$, we have
$f-f_k\in I$, so that $f-g=\lim_{k\to\infty}(f-f_k)$ belongs to $\olI$. Now,
from Theorem~\ref{thm:any-ideal-is-closed}, the ideal $I$ is closed, so that
$f-g\in I$, which completes the proof. 
\end{proof}
\smallskip

We are now in position to prove the main result of Section 4.
\medskip

\begin{theorem}\label{thm:top_confluence_implies_inf_confluence}
  Let $G\subseteq\KXX$ be a set of formal power series, $<$ be a monomial order
  that is compatible with the degree, and $\left(\KXX,\topFPS,\to\right)$ be the
  induced topological rewriting system. Then, $\to$ is $\topFPS$-confluent if
  and only if it is infinitary confluent.
\end{theorem}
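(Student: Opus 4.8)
The plan is to prove the nontrivial direction: assuming $\to$ is $\topFPS$-confluent, show it is infinitary confluent. The easy direction is immediate since $\transRew\ \subseteq\ \topRew$, so that $\topFPS$-confluence is a special case of the hypothesis of infinitary confluence applied to finite rewriting sequences; more precisely, if $\to$ is infinitary confluent, then for $a\transRew b$ and $a\transRew c$ we get $a\topRew b$, $a\topRew c$, hence a common $d$, which is exactly $\topFPS$-confluence. So I would dispatch that in one sentence and focus on the converse.

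For the converse, suppose $f\topRew g$ and $f\topRew h$. By Proposition~\ref{prop:f-h-in-I-bar}, both $f-g$ and $f-h$ lie in $I$, hence $g-h\in I$ as well. Also, the hypothesis that $\to$ is $\topFPS$-confluent means, by the cited~\cite[Theorem 4.1.3]{Chenavier20}, that $G$ is a standard basis of $I$. The idea is then to run the ``simultaneous reduction'' process sketched in the introduction: set $g_0:=g$, $h_0:=h$, and as long as $g_k\neq h_k$, note that $g_k-h_k$ is a nonzero element of $I$ (this must be maintained as an invariant: $g_k-h_k\in I$ at every step, which follows because each rewriting step changes $g_k$ by an element $\frac{\lambda}{\lc(s)}m s\in I$), so $\lm(g_k-h_k)\in\lm(I)$ is divisible by some $\lm(s_{i})$, $s_i\in G$. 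We then perform on whichever of $g_k,h_k$ contains the monomial $\lm(g_k-h_k)$ the corresponding rewriting step eliminating that monomial (and leave the other fixed, or rewrite both consistently), producing $g_{k+1}$, $h_{k+1}$ with $g_{k+1}\transRew$-reachable from $g_k$ (resp. $h_{k+1}$ from $h_k$) and with $g_{k+1}-h_{k+1}\in I$ and $\lm(g_{k+1}-h_{k+1})\opord\lm(g_k-h_k)$ strictly. If at some finite stage $g_k=h_k=:d$, then $g\transRew d$ and $h\transRew d$, in particular $g\topRew d$ and $h\topRew d$, and we are done.

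If the process never terminates, I would argue exactly as in Lemma~\ref{lem:m_k-decreasing} and Proposition~\ref{prop:cauchy-sequences}: the leading monomials $\lm(g_k-h_k)$ form a strictly $\opord$-decreasing sequence, hence (the order being compatible with the degree and there being finitely many indeterminates) $\deg\big(\lm(g_k-h_k)\big)\to\infty$, so $\delta(g_k,h_k)\to 0$. A small additional point is that each of $(g_k)_{k\in\N}$ and $(h_k)_{k\in\N}$ is itself Cauchy: since the monomial eliminated at step $k$ is $\lm(g_k-h_k)$, which is $\opord$-decreasing, the changes $g_{k+1}-g_k$ (resp. $h_{k+1}-h_k$) are supported on monomials whose degree tends to infinity, so both sequences are Cauchy and, $\KXX$ being complete, converge to limits $g^{(\infty)}$ and $h^{(\infty)}$. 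From $\delta(g_k,h_k)\to 0$ we get $g^{(\infty)}=h^{(\infty)}=:d$. Finally, by construction every neighbourhood of $d$ contains some $g_k$ with $g\transRew g_k$, so $g\topRew d$, and likewise $h\topRew d$; this gives infinitary confluence.

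The main obstacle is the bookkeeping needed to make the simultaneous-reduction step precise: one must specify the step carefully enough that (i) the invariant $g_k-h_k\in I$ is preserved, (ii) $g_k\transRew g_{k+1}$ and $h_k\transRew h_{k+1}$ genuinely hold as rewriting steps of the system $(\KXX,\topFPS,\to)$ (so the monomial being eliminated must actually occur with the right coefficient structure in $g_k$ or $h_k$), and (iii) the leading monomial of the difference strictly decreases. Making sure that eliminating $\lm(g_k-h_k)$ from the side where it appears does not secretly reintroduce it on the other side — and that when it appears on neither side with a surviving coefficient we instead rewrite it on both — is the delicate part; once that is set up cleanly, the convergence arguments are verbatim those of Section~3. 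I would also remark that the termination-in-finitely-many-steps case and the limit case can be unified by allowing the process to be a (possibly transfinite-looking but actually $\N$-indexed, by well-foundedness of $\opord$ one could even bound it, though for the limit argument $\N$ suffices) sequence, but presenting the two cases separately as above is cleanest.
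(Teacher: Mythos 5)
Your proposal is correct and follows essentially the same route as the paper's proof: reduce to the nontrivial direction, identify $\topFPS$-confluence with $G$ being a standard basis via \cite[Theorem 4.1.3]{Chenavier20}, use Proposition~\ref{prop:f-h-in-I-bar} (and hence closedness of $I$) to keep $g_k-h_k\in I$, run the simultaneous reduction on $m_k=\lm(g_k-h_k)$, then use strict $\opord$-decrease of $(m_k)$ plus degree-compatibility to obtain Cauchy sequences with a common limit. The bookkeeping worry you flag is exactly the one the paper resolves by always subtracting $\coeff{g_k,m_k}\bigl(m_k-\tfrac{1}{\lc(s)}m\,\rem(s)\bigr)$ from $g_k$ and the analogous term from $h_k$, so that a zero coefficient gives a no-op and the step is a genuine $\reflRew$ on each side.
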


\begin{proof}
  We only have to show that if $\to$ is $\topFPS$-confluent, then it is also
  infinitary confluent. Hence, we assume that $\to$ is $\topFPS$-confluent,
  which, from~\cite[Theorem 4.1.3]{Chenavier20}, means that $G$ is a standard
  basis of the ideal $I=I(G)$ it generates.
  \smallskip

  Let $f,g,h\in\KXX$ be formal power series such that:
  \[\begin{tikzcd}
  & f\ar[dl, circle]\ar[dr, circle] &\\
  g & & h
  \end{tikzcd}
  \smallskip\]

 In the following, we shall define two sequences $\left(g_k\right)_{k\in\N}$ and $\left(h_k\right)_{k\in\N}$ of
  elements of $\KXX$ such that $g\transRew g_k$ and $h\transRew h_k$, for every
  $k\in\N$. Then, we will show that these two sequences are Cauchy sequences, hence have
  limits, and that these two limits are equal, which will conclude the proof. We
  construct the two sequences by the following recursive procedure.  
  \medskip
  
  \begin{description}
  \item[Base step:] we let $g_0:=g$ and $h_0:=h$.
  \item[Recursive step:] let $k\in\N$ and assume that we have defined rewriting
    sequences 
    \[g=g_0\reflRew g_1\reflRew\cdots\reflRew g_k, \qquad
    h=h_0\reflRew h_1\reflRew\cdots\reflRew h_k,
    \smallskip\]
    where $\reflRew$ is the reflexive closure of $\to$, \ie, $a\reflRew b$ if
    $a=b$ or if $a\to b$. If we have $g_k-h_k=0$, we define all subsequent terms
    of the sequences as $g_l:=g_k=h_k$ and $h_l:=h_k=g_k$, for every $l\geq k$.
    Otherwise, if $g_k-h_k\neq 0$, we let
    \[m_k:=\lm\left(g_k-h_k\right).
    \smallskip\]
    From Proposition~\ref{prop:f-h-in-I-bar}, the formal
    power series $f-g$, $f-h$, $g-g_k$, and $h-h_k$ belong to $I$, so that:
    \[g_k-h_k=(g_k-g)+(g-f)+(f-h)+(h-h_k)\in I.
    \smallskip\]
    Since $G$ is a standard basis of $I$, there exist $s\in G$ and $m\in[\X]$
    such that $m_k=m\lm(s)$. Then, we define $g_{k+1}$ and $h_{k+1}$ by
    rewriting the monomial $m_k$ if possible, \ie, we let
    \[\begin{split}
    g_{k+1}&:=g_k- \coeff{g_k,m_k}\left(m_k-\frac{1}{\lc(s)}(m
    \rem(s))\right),\\
    h_{k+1}&:=h_k- \coeff{h_k,m_k}\left(m_k-\frac{1}{\lc(s)}(m
    \rem(s))\right).
    \end{split}
    \smallskip\]
    Indeed, we have either $g_{k+1}=g_k$ or $g_k\to g_{k+1}$, depending on 
    $\coeff{g_k,m_k}$ is equal to zero or not, hence we have $g_k\reflRew
    g_{k+1}$. In the same manner, we have $h_k\reflRew h_{k+1}$. Since
    $m_k\in\supp(g_k-h_k)$, we cannot have $g_{k+1}=g_k$ and
    $h_{k+1}=h_k$. Moreover, notice that $m_{k+1}\opord m_{k}$ because
    $\coeff{g_{k},m}=\coeff{h_{k},m}$ for every $m_{k}\opord m$ and
    $m_{k}$, which does not belong to $\supp(g_{k+1})\cup\supp(h_{k+1})$, rewrites
    into a series consisting of monomials that are strictly smaller than $m_{k}$
    for $\opord$, so that $\coeff{g_{k+1},m}=\coeff{h_{k+1},m}$ for every
    $m_{k}\leq_{\text{op}} m$. Hence, we have $m_{k+1}\opord m_{k}$.
    Finally,
    since $g_{k+1}$ and $h_{k+1}$ are equal or are obtained from $g_{k}$ and $h_{k}$
    by rewriting $m_{k}$ into a series consisting of monomials that are strictly
    smaller than $m_{k}$ for $\opord$, we have
    $\coeff{g_{k+1},m}=\coeff{g_{k},m}$ and $\coeff{h_{k+1},m}=\coeff{h_{k},m}$ for
    $m_{k}\opord m$.
  \end{description}
  If for some $k\in\N$ we have $g_k-h_k=0$, then letting $\ell=g_k=h_k$, we have  
  $g\transRew\ell$ and $h\transRew\ell$, and thus:
  \[\begin{tikzcd}
  & f\ar[dl, circle]\ar[dr, circle] &\\
  g\ar[dr, dashed, circle] & & h\ar[dl, dashed, circle]\\
  & \ell &
  \end{tikzcd}
  \smallskip\]
  Now, assume that for every $k\in\N$, we have $g_k-h_k\neq 0$. Since for each
  $k\in\N$, we have either $g_{k+1}\neq g_k$ or $h_{k+1}\neq h_k$, then at least
  one of the two sequences is not ultimately stationary. However, both of these
  sequences are Cauchy sequences. This is obvious if a sequence is ultimately
  stationary. If not, say, for instance, $\left(g_k\right)_{k\in\N}$ is not
  ultimately stationary, we first prove by induction on $i\in\N$ that we have,
  for every $k \in \N$ and for every monomial  $m$ such that $m_k\opord m$:
  \begin{equation}\label{equ:g_m_k_i}
    \forall i\in\N:\quad
    \coeff{g_k,m}=\coeff{g_{k+i},m}. 
  \end{equation}
  For $i=0$, this is obvious. Assume~\eqref{equ:g_m_k_i} for $i\in\N$. Since
  $\coeff{g_{k+i+1},m}=\coeff{g_{k+i},m}$ for every monomial $m$ such that
  $m_{k+i}\opord m$ and since $m_{k+i}\leq_\text{op}m_k$, because
  $\left(m_k\right)_{k\in\N}$ is strictly decreasing for $\opord$, we have
  \[\coeff{g_{k+i+1},m}=\coeff{g_{k+i},m}=\coeff{g_k,m},
  \smallskip\]
  for $m_k\opord m$, and the induction is over. As $\opord$ is compatible
  with the degree, from~\eqref{equ:g_m_k_i}, we have:
  \[\forall i,k\in\N:\quad \delta\left(g_k,g_{k+i}\right)\leq
  \frac{1}{2^{\deg(m_k)}}.
  \smallskip\]
  Since $\left(m_k\right)_{k\in\N}$ is strictly decreasing, we get that this
  distance goes to $0$ as $k,i\to\infty$, \ie, $\left(g_k\right)_{k\in\N}$ is a
  Cauchy sequence. By the same reasoning, we show that
  $\left(h_k\right)_{k\in\N}$ is also a Cauchy sequence. Using the
  Cauchy-completeness of $\KXX$, the two sequences have limits, denoted by
  $g_\infty$ and $h_\infty$. Moreover, by construction, we have $g\topRew
  g_\infty$ and $h\topRew h_\infty$. 
  \smallskip

  It remains to show that $g_\infty=h_\infty$. But we have that $\delta(g_k,
  h_k) = 2^{-\deg(m_k)}$ for every $k \in \N$ and, by general metric spaces
  properties, the metric $\delta$ is continuous on $\KXX^2$ equipped with the
  product topology $\topFPS \times \topFPS$. Hence, $\delta(g_\infty, h_\infty)
  = \lim_{k \to \infty} \delta(g_k,h_k) = \lim_{k\to\infty} 2^{-\deg(m_k)}$.
  Finally, since the sequence $\left(m_k\right)_{k\in\N}$ is strictly decreasing
  for $\opord$ and since the order is compatible with degree, it follows that
  $\lim_{k\to\infty} \deg(m_k) = \infty$ and, thus, $\delta(g_\infty, h_\infty)
  = 0$ which allows us to conclude $g_\infty = h_\infty$. Denoting that common
  value by $\ell=g_\infty=h_\infty$, we thus have:
  \[\begin{tikzcd}
  & f\ar[dl, circle]\ar[dr, circle] &\\
  g\ar[dr, dashed, circle] & & h\ar[dl, dashed, circle]\\
  & \ell &
  \end{tikzcd}
  \smallskip\]
  Hence, $\to$ is infinitary confluent.
\end{proof}

From~\cite[Theorem 4.1.3]{Chenavier20}, the rewriting relation $\to$ induced by
a monomial order $<$ and a set $G$ of formal power series is $\topFPS$-confluent
if and only if $G$ is a standard basis relative to $<$ of the ideal it
generates. Hence, we get the following corollary, whose ``only-if'' part is in
fact what was shown in the proof of
Theorem~\ref{thm:top_confluence_implies_inf_confluence}.
\medskip

\begin{corollary}
  A set $G\subseteq\KXX$ is a standard basis, relative to a monomial order $<$
  that is compatible with the degree, of the ideal it generates if and only if
  the rewriting relation induced by $G$ and $<$ is infinitary confluent.
\end{corollary}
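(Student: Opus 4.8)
The plan is to derive the corollary by composing two equivalences already at our disposal. Writing $I=I(G)$ for the ideal generated by $G$, the first ingredient is~\cite[Theorem 4.1.3]{Chenavier20}, stating that $G$ is a standard basis of $I$ relative to $<$ if and only if the induced rewriting relation $\to$ is $\topFPS$-confluent; the second ingredient is Theorem~\ref{thm:top_confluence_implies_inf_confluence}, stating that, when $<$ is compatible with the degree, $\to$ is $\topFPS$-confluent if and only if it is infinitary confluent. Chaining these gives
\[
G \text{ is a standard basis of } I
\iff
\to \text{ is } \topFPS\text{-confluent}
\iff
\to \text{ is infinitary confluent}.
\]

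Concretely, for the ``only if'' direction I would assume $G$ is a standard basis of $I$; then~\cite[Theorem 4.1.3]{Chenavier20} yields $\topFPS$-confluence of $\to$, and the implication proved inside Theorem~\ref{thm:top_confluence_implies_inf_confluence} upgrades this to infinitary confluence. For the ``if'' direction I would start from infinitary confluence of $\to$ and first observe that it implies $\topFPS$-confluence: since $\transRew\ \subseteq\ \topRew$, any pair of finite rewriting sequences $a\transRew b$, $a\transRew c$ is in particular a pair $a\topRew b$, $a\topRew c$, so infinitary confluence supplies $d$ with $b\topRew d$ and $c\topRew d$, which is exactly $\topFPS$-confluence; then~\cite[Theorem 4.1.3]{Chenavier20}, applied in the converse direction, shows that $G$ is a standard basis of $I$.

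There is essentially no new mathematical difficulty here: the content is entirely carried by Theorem~\ref{thm:top_confluence_implies_inf_confluence}. The only points that need care are to invoke the degree-compatibility hypothesis precisely where Theorem~\ref{thm:top_confluence_implies_inf_confluence} requires it, and to make explicit the elementary implication ``infinitary confluence $\Rightarrow$ $\topFPS$-confluence'' (already noted in the text before Example~\ref{ex:cyclic}), so that the ``if'' direction is fully justified rather than merely asserted.
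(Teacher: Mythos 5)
Your proof is correct and follows exactly the route the paper intends: chain the equivalence of \cite[Theorem~4.1.3]{Chenavier20} (standard basis $\iff$ $\topFPS$-confluence) with Theorem~\ref{thm:top_confluence_implies_inf_confluence} ($\topFPS$-confluence $\iff$ infinitary confluence under degree-compatibility). Your added remark spelling out that infinitary confluence implies $\topFPS$-confluence via $\transRew\ \subseteq\ \topRew$ is the same observation the paper makes just before Example~\ref{ex:cyclic}, so nothing is missing.
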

\bibliography{biblio}

\end{document}